\newtheorem{thm}{Theorem}[section]
\newtheorem{lm}[thm]{Lemma}
\newtheorem{res}[thm]{Result}
\newtheorem{crl}[thm]{Corollary}
\theoremstyle{definition}
\newtheorem{rmk}[thm]{Remark}
\newtheorem{df}[thm]{Definition}
\newtheorem{nota}[thm]{Notation}
\newcommand{\RR}{\mathbb R}
\newcommand{\FF}{\mathbb F}
\newcommand{\ZZ}{\mathbb Z}
\newcommand{\NN}{\mathbb N}
\newcommand{\sett}[2]{ \left\{ #1 \, \, || \, \, #2 \right \} }
\newcommand{\floor}[1]{\left \lfloor #1 \right \rfloor}
\newcommand{\ceil}[1]{\left \lceil #1 \right \rceil}
\newcommand{\mf}{\mathcal F}
\newcommand{\ms}{\mathcal S}
 \DeclareMathOperator{\PG}{PG}
    \newcommand{\pg}{\PG}
 \DeclareMathOperator{\AG}{AG}
    \newcommand{\ag}{\AG}
 \DeclareMathOperator{\PGL}{PGL}
 \DeclareMathOperator{\pr}{pr}
 \newcommand{\fpi}{\FF_p \cup \{\infty\}}
\let\@fnsymbol\@arabic  
\title{Points below a parabola in affine planes of prime order}
\author{
 Sam Adriaensen
  \thanks{Department of Mathematics and Data Science, Vrije Universiteit Brussel, Pleinlaan 2, 1050 Elsene, Belgium. \href{mailto:sam.adriaensen@vub.be}{sam.adriaensen@vub.be}} 
  \thanks{Department of Mathematical Sciences, Worcester Polytechnic Institute, 100 Institute Road, 01605 Worcester, MA, US.} 
 \and Zsuzsa Weiner
  \thanks{HUN-REN-ELTE Geometric and Algebraic Combinatorics Research Group, P\'azm\'any P\'eter s\'et\'any 1/C, H-1117 Budapest, Hungary.
   \href{mailto:zsuzsa.weiner@gmail.com}{zsuzsa.weiner@gmail.com}} 
}
\date{}
\begin{document}

\maketitle

\begin{abstract}
 The elements of a finite field of prime order canonically correspond to the integers in an interval.
 This induces an ordering on the elements of the field.
 Using this ordering, Kiss and Somlai recently proved interesting properties of the set of points below the diagonal line.
 In this paper, we investigate the set of points lying below a parabola.
 We prove that in some sense, this set of points looks the same from all but two directions, despite having only one non-trivial automorphism.
 In addition, we study the sizes of these sets, and their intersection numbers with respect to lines.
\end{abstract}

\paragraph{Keywords.} Finite geometry; Affine planes; Directions. 

\paragraph{MSC.} 51E15. 

\section{Introduction}
Let $q$ be a prime power, let $\FF_q$ be the finite field of order $q$, and let $\ag(2,q)$ denote the Desarguesian affine plane of order $q$.
Let $\ell_\infty$ denote the line that completes $\ag(2,q)$ to a projective plane.
The points of $\ell_\infty$ are often called \emph{directions}, as they correspond to the slopes of the lines of $\ag(2,q)$.
We will denote these directions as $(d)$, with $d \in \FF_q \cup \{\infty\}$.
The affine lines with slope $d$ are exactly the lines with equation $aY = mX + b$ with $\frac m a = d$.

A set $\ms$ of points in $\ag(2,q)$ is said to \emph{determine} direction $(d)$ if $\ms$ contains 2 points spanning a line with slope $d$.
Sets determining few directions have been thoroughly investigated, see e.g.\ the pioneering work of Rédei \cite{Redei:70} and the seminal work of Blokhuis, Ball, Brouwer, Storme, and Sz\H onyi \cite{BBBSS:99,Ball:03}.
The authors essentially proved that a set of $q$ points determining few directions must be the translate of a set which is linear over a subfield of $\FF_q$.

By the pigeonhole principle, any set with more than $q$ points in $\ag(2,q)$ determines all directions.
However, we can associate sets of directions to larger sets of points in a sensible way.
Given a set $\ms$ of points in $\ag(2,q)$, we call a direction $(d)$ \emph{equidistributed} if all lines with slope $d$ intersect $\ms$ in $\floor{|\ms|/q}$ or $\ceil{|\ms|/q}$ points.
Otherwise, we call $(d)$ a \emph{special direction}.
This idea was introduced by Ghidelli \cite{Ghidelli:20}, who extended the results of Rédei \cite{Redei:70} and Sz\H onyi \cite{Szonyi:96} in case $q$ is prime.

In case that $|\ms|$ is divisible by $q$, a direction $(d)$ is equidistributed if and only if all lines with direction $(d)$ intersect $\ms$ in the same number of points.
This special case was further investigated by Kiss and Somlai \cite{KS} in planes of prime order.
They constructed a set of $\binom p2$ points in $\ag(2,p)$ with only 3 special directions, which is not a union of lines, for every prime number $p > 2$.
This stands in sharp contrast to the fact that any set of $p$ points in $\ag(2,p)$ is either a line, or determines at least $\frac{p+3}2$ directions.

The construction of Kiss and Somlai \cite{KS} works as follows.
In case $p$ is prime, every element of $\FF_p$ uniquely corresponds to an integer in the interval $[0, p-1]$.
This naturally induces an order relation $<$ on $\FF_p$.

\begin{res}[{\cite[Theorem 1.1]{KS}}]
 \label{Res:KissSomlai}
 Let $p > 2$ be prime.
 Consider the set of $\frac{p-1}2 p$ points
 \[
  \ms = \sett{ (x,y) \in \FF_p^2 }{ y < x }
 \]
 in $\ag(2,p)$.
 This set has exactly 3 special directions, namely $(0)$, $(1)$, and $(\infty)$.
 Moreover, every set of $\ag(2,p)$ having exactly 3 special directions is equivalent up to affine transformation to either $\ms$ or its complement.
\end{res}

In this paper, first we will investigate the set of points below a parabola, that is 
\[
{\mathcal{S}} = \sett{(x,y) \in \FF_p^2}{ y < \alpha x^2 + \beta x + \gamma},
\]
where $\alpha \in \FF_p^*, \beta, \gamma \in \FF_p$.
In Theorem \ref{Thm:SizeS}, we estimate the spectrum of the possible sizes of such sets.
To be able to express the unexpected properties of such sets $\ms$, we introduce the \emph{projection function}, following \cite{KS}.

\begin{df}
We define the \emph{projection function} of $\ms$ from direction $(d) \in \fpi$ to be $\pr_{\ms,d}:\FF_p \to \NN$ that maps $b$ to the number of points of $\ms$ on the line $Y=dX+b$ if $(d) \neq (\infty)$, or the line $X+b=0$ if $(d) = (\infty)$.
\end{df}

We will see that two directions, namely $(0)$ and $(\infty)$, are distinguished from all other directions.
For the remaining directions $(d)$, the projection functions are cyclic shifts of each other, see Theorem \ref{Thm:Parabola}.
It is natural to think that such a phenomenon is due to the fact that $\mathcal S$ has large symmetry, but this is disproved in Theorem \ref{thm:sym}.
Finally, we also study the point set above a parabola and also these point sets by adding the parabola itself and we prove similar results as before. 
In order to prove our results, we use results concerning the quadratic character on $\FF_p$.

\bigskip

In an accompanying paper, the authors of this note together with Tam\'as Sz\H onyi take another approach towards generalising \Cref{Res:KissSomlai}, by exploiting a connection with codes generated by incidence matrices of finite projective planes \cite{Directions}.

\section{The distribution of squares modulo a prime}

Let $p>2$ be a prime number.
Throughout this section, $Q$ and $N$ denote the sets of non-zero squares and non-squares of $\FF_p$ respectively.
The \emph{quadratic character} on $\FF_p$ is defined as the function
\[
 \chi: \FF_p \to \RR: x \mapsto \begin{cases}
  1 & \text{if } x \in Q, \\
  0 & \text{if } x = 0, \\
  -1 & \text{if } x \in N.
 \end{cases}
\]

We can naturally interpret $\chi$ as a function $\ZZ \to \RR$.
The sum of the quadratic character over an interval is bounded by a famous result of P\'olya \cite{Polya:18} and Vinogradov \cite{Vinogradov:18}.

\begin{res}[P\'olya-Vinogradov inequality]
 \label{Res:PolyaVinogradov}
 For any two integers $a, b$,
 \[
  \left| \sum_{x=a}^b \chi(x) \right| \leq \sqrt p \ln(p).
 \]
\end{res}

As a converse to the P\'olya-Vinogradov inequality, we have the following result by S\'arközy \cite{Sarkozy:77} and Sokolovski\u{\i} \cite{Sokolovskii:82}, see also Csikv\'ari \cite{Csikvari:09}.

\begin{res}
 \label{Res:Csikvari}
 There exists an element $b \in \FF_p$ such that
 \[
  \left| \sum_{x=1}^b \chi(x) \right| \geq \frac1{2\pi} \sqrt p.
 \]
\end{res}

\begin{nota}
 We let $\nu:\FF_p \to \NN$ denote the function that maps an element $x \in \FF_p$ to the corresponding integer in $\{0,\dots,p-1\}$.
\end{nota}

By a result of Lebesgue \cite{Lebesgue:42}, see also Aebi and Cairns \cite{Aebi:Cairns}, we know the following about the sum of the squares of $\FF_p$.

\begin{res}
 \label{Res:Lebesgue}
 Let $p$ be an odd prime, and let $n = \left| N \cap \left[ 1, \frac{p-1}2 \right] \right|$.
 Then
 \[
  \sum_{x \in Q} \nu(x) = \begin{cases}
   p\frac{p-1}{4} & \text{if } p \equiv 1 \pmod 4, \\
   pn & \text{if } p \equiv -1 \pmod 8, \\
   p \left( \frac n3 + \frac{p-1}6 \right) & \text{if } p \equiv 3 \pmod 8.
  \end{cases}
 \]
\end{res}

By considering the P\'olya-Vinogradov inequality on the interval $\left[1,\frac{p-1}2\right]$, we find the following bound on $n$.

\begin{crl}
 \label{Crl:Lebesgue}
 Let $n$ be as in \Cref{Res:Lebesgue}.
 Then $\left| n - \frac{p-1}4 \right| \leq \frac 1 2 \sqrt p \ln(p)$ and hence
 \[
  \left|\sum_{x \in Q} \nu(x) - p\frac{p-1}4 \right| \leq \begin{cases}
   0 & \text{if } p \equiv 1 \pmod 4, \\
   \frac{1}2 p\sqrt{p}ln(p) & \text{if } p \equiv -1 \pmod 8, \\
   \frac{1}6 p\sqrt{p}ln(p) & \text{if } p \equiv 3 \pmod 8.
  \end{cases}
 \]
\end{crl}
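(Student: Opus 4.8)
The plan is to derive the bound on $n$ directly from the Pólya–Vinogradov inequality (\Cref{Res:PolyaVinogradov}) and then feed it into the three cases of \Cref{Res:Lebesgue}.

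For the first inequality, I would apply \Cref{Res:PolyaVinogradov} to the interval $\left[1, \frac{p-1}2\right]$. The key observation is that $0$ does not lie in this interval, so $\chi$ takes only the values $\pm 1$ there; hence, writing $m = \left|Q \cap \left[1,\frac{p-1}2\right]\right|$ for the number of nonzero squares in the interval, we have $m + n = \frac{p-1}2$ and
\[
 \sum_{x=1}^{(p-1)/2} \chi(x) = m - n = \frac{p-1}2 - 2n.
\]
Bounding the left-hand side by $\sqrt p \ln(p)$ and dividing by $2$ yields exactly $\left|n - \frac{p-1}4\right| \leq \frac12 \sqrt p \ln(p)$.

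For the second inequality, I would simply substitute this estimate into each of the three formulas of \Cref{Res:Lebesgue}. When $p \equiv 1 \pmod 4$, the value equals $p\frac{p-1}4$ exactly, so the difference is $0$. When $p \equiv -1 \pmod 8$, the value is $pn$, and the difference $p\left|n - \frac{p-1}4\right|$ is bounded by $\frac12 p \sqrt p \ln(p)$. When $p \equiv 3 \pmod 8$, a short computation gives
\[
 p\left(\frac n3 + \frac{p-1}6\right) - p\frac{p-1}4 = \frac p3 \left(n - \frac{p-1}4\right),
\]
so the difference equals $\frac p3 \left|n-\frac{p-1}4\right| \leq \frac16 p \sqrt p \ln(p)$.

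Since the argument is just a combination of two cited results, I do not expect a genuine obstacle here; the only points requiring care are noting that $\chi$ never vanishes on $\left[1,\frac{p-1}2\right]$ (so that the character sum really counts $m - n$ rather than being offset by a zero term), and correctly simplifying the coefficient $\frac13$ that produces the factor $\frac p3$ in the $p \equiv 3 \pmod 8$ case.
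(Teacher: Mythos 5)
Your proposal is correct and follows exactly the route the paper intends: apply the P\'olya--Vinogradov inequality on the interval $\left[1,\frac{p-1}2\right]$, use $m+n=\frac{p-1}2$ to convert the character sum into $\frac{p-1}2 - 2n$, and then substitute the resulting bound on $n$ into the three cases of \Cref{Res:Lebesgue}. The arithmetic in each case (in particular the factor $\frac p3$ in the $p \equiv 3 \pmod 8$ case) checks out.
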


Call two elements $x$ and $y$ of $\FF_p$ \emph{consecutive} if $y = x+1 \neq 0$.

\begin{lm}
 \label{Lm:Consecutive}
 Let $f$ be a quadratic polynomial on $\FF_p$.
 The image of $f$ contains at least $\frac{p-5}{4}$ pairs of consecutive elements.
\end{lm}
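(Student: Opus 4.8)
The plan is to reduce the count to a standard quadratic character sum. Write $f(X) = \alpha X^2 + \beta X + \gamma$ with $\alpha \neq 0$. Completing the square shows that the image of $f$ is
\[
 S := \sett{f(t)}{t \in \FF_p} = \alpha \cdot (Q \cup \set{0}) + c, \qquad c := \gamma - \frac{\beta^2}{4\alpha},
\]
an affine image of the set of squares $Q \cup \set{0}$. Two consecutive elements $y, y+1$ both lie in $S$ exactly when $(y - c)/\alpha$ and $(y + 1 - c)/\alpha$ are both squares. Substituting $u = (y-c)/\alpha$, a bijection of $\FF_p$, and setting $a := 1/\alpha \neq 0$, this becomes the condition that both $u$ and $u + a$ lie in $Q \cup \set{0}$. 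The only consecutive pair excluded by the definition is the wrap-around $y = -1,\ y+1 = 0$; hence the number of consecutive pairs in $S$ equals
\[
 N - \delta, \quad \delta \in \set{0,1}, \qquad \textnormal{where } N := \#\sett{u \in \FF_p}{u,\, u + a \in Q \cup \set{0}},
\]
so it suffices to prove $N \geq \frac{p-1}{4}$.

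Next I would compute $N$ exactly via characters. On $\FF_p^*$ the function $\frac{1 + \chi(t)}{2}$ is the indicator of $Q$; treating separately the two values $u = 0$ and $u = -a$ at which an argument vanishes (where $0 \in Q \cup \set{0}$), I expand
\[
 \sum_{u \neq 0, -a} \frac{\bigl(1 + \chi(u)\bigr)\bigl(1 + \chi(u+a)\bigr)}{4} = \frac14\Bigl( (p - 2) - \chi(a) - \chi(-a) + \sum_{u \in \FF_p} \chi\bigl(u(u+a)\bigr) \Bigr).
\]
The crucial input is the classical evaluation $\sum_{u \in \FF_p} \chi(u(u+a)) = -1$ for $a \neq 0$, which follows from the standard formula for $\sum_u \chi(u^2 + au)$ (its discriminant $a^2$ is nonzero). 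Adding back the two boundary terms, contributing $\frac{1 + \chi(a)}{2}$ (from $u = 0$) and $\frac{1 + \chi(-a)}{2}$ (from $u = -a$), gives the closed form
\[
 N = \frac{p + 1 + \chi(a) + \chi(-a)}{4}.
\]

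Finally I would bound $\chi(a) + \chi(-a)$ from below. Since $\chi(-a) = \chi(-1)\chi(a)$, this sum vanishes when $p \equiv 3 \pmod 4$ and equals $2\chi(a) \geq -2$ when $p \equiv 1 \pmod 4$; in every case $\chi(a) + \chi(-a) \geq -2$, so $N \geq \frac{p-1}{4}$. Subtracting the single wrap-around pair yields at least $\frac{p-1}{4} - 1 = \frac{p-5}{4}$ consecutive pairs in the image, as claimed. I do not expect a genuine obstacle: the one nonroutine ingredient is the classical character sum, and the only real care needed is the bookkeeping of the three places where $0$ enters as a square (the two vanishing arguments and the excluded wrap-around pair). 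The closed form for $N$ also shows that the constant $-5$ is essentially sharp, being attained when $p \equiv 1 \pmod 4$, when $1/\alpha$ is a non-square, and when the wrap-around pair happens to lie in $S$.
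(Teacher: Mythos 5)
Your proof is correct and follows essentially the same route as the paper's: both reduce the claim to counting pairs of elements of $Q \cup \{0\}$ (or its non-square analogue) at a fixed difference and then discard at most one pair for the wrap-around $y+1=0$; the only difference is that you evaluate the character sum $\sum_{u}\chi(u(u+a))=-1$ yourself, with step $a = 1/\alpha$ treating the two quadratic characters of $\alpha$ uniformly, whereas the paper cites Jacobsthal's count of consecutive elements of $Q$ and of $N$ and observes that a translation destroys at most one pair. Your exact formula $\frac{p+1+\chi(a)+\chi(-a)}{4}$ for the unrestricted count checks out and, as you note, shows the constant in the lemma is essentially sharp.
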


\begin{proof}
 We can write $f(X) = \alpha (X-x)^2 + \gamma$ for certain $\alpha, \gamma, x \in \FF_p$.
 By a result of Jacobsthal \cite{Jacobsthal:06}, $Q$ has $\floor{\frac{p-3}4}$ pairs of consecutive elements, hence $Q \cup \{0\}$ has $\floor{\frac{p+1}4}$ such pairs, and $N$ has $\floor{\frac{p-1}4}$ pairs of consecutive elements.
 The lemma follows from the fact that the image of $f$ equals $(Q \cup \{0\}) + \gamma$ \footnote{Given a set $S \subseteq \FF_p$ and an element $t \in \FF_p$, $S + t$ denotes the set $\sett{s+t}{s \in S}$.} if $\alpha \in Q$, and $(N \cup \{0\}) + \gamma$ if $\alpha \in N$, and that the translation of a set destroys at most one pair of consecutive elements.
\end{proof}

\section{The set of points below a parabola.}

\begin{thm}
 \label{Thm:Parabola}
 Take a quadratic polynomial $f(X) = \alpha X^2 + \beta X + \gamma$, $\alpha \neq 0$ over $\FF_p$, with $p > 2$ prime.
 Consider the set
 \[
  \ms = \sett{(x,y) \in \FF_p^2}{ y < f(x)}
 \]
 of points below the parabola $Y = f(X)$.
 For any $d \in \FF_p^*$ and $b \in \FF_p$,
 \[
  \pr_{\ms,d}(b) = \pr_{\ms,1}\left( b - \left(\beta-\frac{d+1}2\right)\frac{d-1}{2\alpha} \right).
 \]
 Moreover, the image of $\pr_{\ms,d}$ is an interval whose length lies between $\frac{\sqrt p}{2 \pi}$ and $\sqrt p \ln(p)$.
\end{thm}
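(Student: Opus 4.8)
The plan is to derive a single closed formula for $\pr_{\ms,d}(b)$ that depends on $d$ and $b$ only through the quantity $\Delta := b - \gamma + \frac{(d-\beta)^2}{4\alpha}$, after which both assertions fall out. First I fix $d \in \FF_p^*$ and $b$, and for each $x$ classify the point $(x,dx+b)$ as \emph{below}, \emph{on}, or \emph{above} the parabola according to whether $\nu(dx+b)$ is less than, equal to, or greater than $\nu(f(x))$; by definition $\pr_{\ms,d}(b)$ counts the \emph{below} points. Writing $D(x) = f(x) - (dx+b) \in \FF_p$ and completing the square gives $D(x) = \alpha(x-r^*)^2 - \Delta$ with $r^* = \frac{d-\beta}{2\alpha}$. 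The crucial (wraparound) observation is that for every $x$,
\[
 \nu(f(x)) - \nu(dx+b) = \nu(D(x)) - p\cdot\mathbf{1}[(x,dx+b)\text{ lies above}],
\]
since the integer difference on the left lies in $(-p,p)$ and differs from $\nu(D(x))\in[0,p)$ by exactly $p$ precisely on the \emph{above} points. Summing over $x$, the left side becomes $\sum_x\nu(f(x)) - \sum_x\nu(dx+b) = |\ms| - \binom p2$, where I use that $x\mapsto dx+b$ is a bijection (the only place $d\neq 0$ enters, which is exactly why $(0)$ is excluded). On the right side, substituting $\rho = x-r^*$ gives $\sum_x\nu(D(x)) = S(\Delta) := \sum_{\rho}\nu(\alpha\rho^2-\Delta)$, while the number of \emph{above} points is $p - \pr_{\ms,d}(b) - n_0(\Delta)$ with $n_0(\Delta) := \#\{\rho : \alpha\rho^2=\Delta\} = 1 + \chi(\Delta/\alpha)$.

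Solving for $\pr_{\ms,d}(b)$ yields the master formula $\pr_{\ms,d}(b) = C - n_0(\Delta) - \tfrac1p S(\Delta)$, with $C$ depending only on $p$ and $|\ms|$. Since the right side depends on $(d,b)$ only through $\Delta$, the first claim is immediate: $\pr_{\ms,d}(b) = \pr_{\ms,1}(b')$ whenever the two choices give the same $\Delta$, and setting $\Delta(d,b) = \Delta(1,b')$ and solving produces exactly $b' = b - (\beta - \frac{d+1}2)\frac{d-1}{2\alpha}$. For the \emph{moreover} part I study the discrete derivative in $\Delta$. Using $\nu(u-1)-\nu(u) = -1 + p\,\mathbf{1}[u=0]$ termwise gives $S(\Delta+1)-S(\Delta) = p(n_0(\Delta)-1)$, whence the clean identity
\[
 \pr_{\ms,d}(b+1) - \pr_{\ms,d}(b) = -\chi\!\left(\tfrac{\Delta+1}{\alpha}\right),
\]
valid for all $\Delta$ (the case $\Delta+1=0$ being consistent since $\chi(0)=0$). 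As $b$ ranges over $\FF_p$, so does $\Delta$, so $\pr_{\ms,d}$ is a function on the cyclic group $\ZZ/p$ with consecutive values differing by at most $1$; by the discrete intermediate value theorem its image is an interval of consecutive integers.

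Next I compute the length of that interval. Telescoping the derivative and identifying $\Delta$ with its representative in $\{0,\dots,p-1\}$ shows that $\pr_{\ms,d}(b)$ equals $C' - \chi(\alpha)T(\Delta)$ for the partial character sum $T(\Delta) = \sum_{n=1}^{\Delta}\chi(n)$, so the interval length equals the full oscillation $\max_\Delta T(\Delta) - \min_\Delta T(\Delta)$ (the sign $\chi(\alpha)=\pm1$ only swaps max and min). For the upper bound, if $\Delta_1,\Delta_2$ achieve the maximum and minimum of $T$, then $\max T - \min T = \pm\sum_{n}\chi(n)$ summed over the integer interval between $\Delta_1$ and $\Delta_2$, whose absolute value is at most $\sqrt p\ln(p)$ by the interval form of P\'olya--Vinogradov (\Cref{Res:PolyaVinogradov}); applying the bound to this single interval, rather than to $T(\Delta_1)$ and $T(\Delta_2)$ separately, is precisely what avoids a spurious factor $2$. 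For the lower bound, \Cref{Res:Csikvari} supplies a value with $\left|T(\cdot)\right|\ge\frac{\sqrt p}{2\pi}$; since $T(0)=0$ forces $\min T\le 0\le\max T$, this one large partial sum already yields $\max T - \min T \ge \frac{\sqrt p}{2\pi}$.

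The main obstacle is the very first step. Because the order $<$ on $\FF_p$ is not translation-invariant, the comparison of $\nu(f(x))$ with $\nu(dx+b)$ cannot be reduced to the single polynomial $D(x)$ without the $p\cdot\mathbf{1}[\text{above}]$ correction; getting this bookkeeping exactly right, so that summation collapses all of the $d$- and $b$-dependence into the single variable $\Delta$, is the heart of the matter. Once the master formula is in hand, the derivative computation and the two character-sum estimates are routine applications of the results assembled in Section 2.
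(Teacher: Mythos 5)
Your proof is correct, and the key step is genuinely different from the paper's. The paper proves the $d$-independence \emph{locally}: after the change of variables $X' = X - \frac{d-1}{2\alpha}$ it analyses the symmetric difference of consecutive fibers $N_{d,b}$ and $N_{d,b+1}$, shows the discrete derivative $|N_{d,b+1}|-|N_{d,b}| = 1-k = -\chi\bigl((\beta-1)^2+4\alpha(b+1-\gamma)\bigr)$ is independent of $d$, and then upgrades equal differences to equal values using the normalisation $\sum_b |N_{d,b}| = |\ms|$. You instead prove it \emph{globally}: the wraparound identity $\nu(f(x))-\nu(dx+b) = \nu(D(x)) - p\cdot\mathbf{1}[\text{above}]$, summed over $x$, yields an exact closed formula for $p\cdot\pr_{\ms,d}(b)$ in which all dependence on $(d,b)$ collapses into $\Delta = b-\gamma+\frac{(d-\beta)^2}{4\alpha}$; the stated shift then drops out of $\Delta(d,b)=\Delta(1,b')$ via the factorisation $(d-\beta)^2-(1-\beta)^2=(d-1)(d+1-2\beta)$. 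I checked the bookkeeping: the correction term is $0$ on the ``on'' and ``below'' points and exactly $p$ on the ``above'' points, $\sum_x\nu(dx+b)=\binom p2$ uses $d\neq 0$ exactly where it should, $n_0(\Delta)=1+\chi(\Delta/\alpha)$, and your discrete derivative $-\chi((\Delta+1)/\alpha)$ agrees with the paper's $-\chi((\beta-1)^2+4\alpha(b+1-\gamma))$ after pulling out $\chi(4\alpha)$. From that point on (image is an interval by the discrete intermediate value argument; length equals the oscillation of the partial character sum $T$, bounded above by P\'olya--Vinogradov applied to a single interval and below by S\'ark\"ozy--Sokolovski\u{\i} together with $T(0)=0$) your argument coincides with the paper's, and is if anything slightly more explicit about why no factor of $2$ is lost in the upper bound. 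What your route buys is an explicit formula for each fiber size, with no need for the normalisation step; what the paper's route buys is that it never has to confront the non-translation-invariance of $<$ head-on, since it only ever compares two fibers at a time.
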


\begin{proof}
 We need to prove that for any value of $b$,
 \[
  \pr_{\ms,d}\left( b + \left(\beta-\frac{d+1}2\right)\frac{d-1}{2\alpha} \right)
 \]
 is independent of the choice of $d \in \FF_p^*$.
 This integer equals the number of solutions to the inequality
 \[
  dX + b + \left(\beta-\frac{d+1}2\right)\frac{d-1}{2\alpha} < \alpha X^2 + \beta X + \gamma.
 \]
 We apply the linear transformation $X' = X - \frac{d-1}{2\alpha}$ to see that we may equivalently count the number of solutions to the inequality
 \begin{equation}
  \label{Eq:X'}
  X' + b + \delta_d(X') < \alpha X'^2 + \beta X' + \gamma + \delta_d(X'),
 \end{equation}
 with $\delta_d(X') = (d-1) X' + \frac{(d-1)^2}{4\alpha} + \beta \frac{d-1}{2\alpha}$.
 
 Let $N_{d,b}$ denote the set of solutions to Equation (\ref{Eq:X'}).
 Then we need to prove that $|N_{d,b}|$ is independent of the choice of $d \in \FF_p^*$.
 Since we know that $\sum_{b \in \FF_p} |N_{d,b}| = |\ms|$ for any $d$, it actually suffices to prove that for any $b$,
 \[
  |N_{d,b+1}|-|N_{d,b}| = |N_{d,b+1} \setminus N_{d,b}| - |N_{d,b} \setminus N_{d,b+1}|
 \]
 is independent of $d \in \FF_p^*$.
 Note that
 \begin{align*}
  x \in N_{d,b+1} \setminus N_{d,b} && \iff && x+b+\delta_d(x)=-1 && \text{ and } && f(x) + \delta_d(x) \neq 0, \\
  x \in N_{d,b} \setminus N_{d,b+1} && \iff && x+b+1=f(x) && \text{ and } && f(x) + \delta_d(x) \neq 0.
 \end{align*}
 Let $k$ denote the number of solutions to $X+b+1 = f(X)$.
 Since $d \neq 0$, there is a unique solution $x_0$ to $X + b + \delta_d(X) = -1$.
 
 First suppose that $x_0$ is a solution to $f(X) + \delta_d(X) = 0$.
 Then it is also a solution to $X+b+1 = f(X)$, hence
 \begin{align*}
  |N_{d,b+1} \setminus N_{d,b}| = 0, &&
  |N_{d,b} \setminus N_{d,b+1}| = k-1.
 \end{align*}

 If $x_0$ is not a solution to $f(X) + \delta_d(X) = 0$, then
 \begin{align*}
  |N_{d,b+1} \setminus N_{d,b}| = 1, &&
  |N_{d,b} \setminus N_{d,b+1}| = k.
 \end{align*}
 To see that the latter equality holds, we need to prove that no solution $x$ of $X+b+1 = f(X)$ satisfies $f(X) + \delta_b(X) = 0$.
 If that were the case, then $x + b + \delta_d(x) = -1$, which implies that $x = x_0$ and contradicts that $f(x_0) + \delta_d(x_0) \neq 0$.

 We conclude that in both cases
 \[
  |N_{d,b+1} \setminus N_{d,b}| - |N_{d,b} \setminus N_{d,b+1}| = 1-k,
 \]
 which is indeed independent of the choice of $d \in \FF_p^*$.

 Note that $k$ is the number of roots of the quadratic equation $f(X) - X - b - 1 = 0$.
 Therefore, $k$ depends on the quadratic character of the discriminant $(\beta-1)^2 + 4\alpha(b+1-\gamma)$.
 More specifically,
 \[
  1-k = -\chi( (\beta-1)^2 + 4\alpha(b+1-\gamma) ).
 \]
 This means $N_{d,b}$ and $N_{d,b+1}$ differ by $1-k$ and $|1-k| \leq 1$, hence that the image of $\pr_{\ms,d}$ consists of all integers in an interval.
 Moreover, for any $a,b \in \FF_p$,
 \begin{align*}
  \Big | |N_{d,b}| - |N_{d,a}| \Big |
  &= \left | \sum_{x = a+1}^b \chi( (\beta-1)^2 + 4\alpha(x-\gamma) ) \right| \\
  &= |\chi(4 \alpha)| \left | \sum_{x = a+1}^b \chi \left( \frac{(\beta-1)^2}{4\alpha} + x-\gamma \right) \right|
  = \left | \sum_{x = \frac{(\beta-1)^2}{4\alpha} + a+1-\gamma}^{\frac{(\beta-1)^2}{4\alpha} + b-\gamma} \chi \left( x \right) \right|
 \end{align*}
 Therefore, the length of the interval formed by the image of $\pr_{\ms,d}$ equals the maximum value of the above sum.
 We know from Results \ref{Res:PolyaVinogradov} and \ref{Res:Csikvari} that this maximum value is between $\frac{\sqrt p}{2 \pi}$ and $\sqrt p \ln(p)$.
\end{proof}

From now on, we let $f(X)$ and $\ms$ be as in \Cref{Thm:Parabola}.

\begin{rmk}
 \label{Rmk:Parabola}
 If $f_1(X)$ can be transformed in $f_2(X)$ by a linear transformation in the variable $X$, i.e.\ $f_1(X) = f_2(aX + b)$ for some $a\neq 0$ and $b$ in $\FF_p$, then the sets of points defined by $y < f_1(x)$ and $y < f_2(x)$ are isomorphic.
 In particular, we can choose $\beta$ to be $0$ without loss of generality, and we will do so for the remainder of this section.
 Moreover, only the quadratic character $\chi(\alpha)$ of $\alpha$ matters.
\end{rmk}

Next, we prove bounds on the size of $\ms$.

\begin{thm}
 \label{Thm:SizeS}
 Let $\ms$ be the set of points below a parabola in $\ag(2,p)$, with $p > 2$ prime.
 Then
 \begin{align*}
  \left||\ms| - \binom p 2\right| \leq c_p p \sqrt p \ln(p) && \text{with} &&
  c_p = \begin{cases}
   1 & \text{if } p \equiv 1 \pmod 4, \\
   2 & \text{if } p \equiv -1 \pmod 8, \\
   \frac43 & \text{if } p \equiv 3 \pmod 8.
  \end{cases}
 \end{align*}
 Moreover, $|\ms|$ is a multiple of $p$.
\end{thm}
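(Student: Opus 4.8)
The plan is to count $\ms$ column by column and reduce its size to character sums that the earlier results already control. Since the points of $\ms$ on the vertical line $X = x$ are precisely the $(x,y)$ with $y < f(x)$, there are exactly $\nu(f(x))$ of them, so $|\ms| = \sum_{x \in \FF_p} \nu(f(x))$. By completing the square (equivalently, invoking \Cref{Rmk:Parabola}) I would assume $\beta = 0$, so $f(x) = \alpha x^2 + \gamma$. As $x$ runs over $\FF_p$, the value $\alpha x^2 + \gamma$ equals $\gamma$ once, hits each element of $\alpha Q + \gamma = \{\alpha s + \gamma : s \in Q\}$ exactly twice, and misses $\alpha N + \gamma$ entirely. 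Comparing this multiset with the uniform one, for which $\sum_{t \in \FF_p}\nu(t) = \binom p2$, I obtain
\[
 |\ms| - \binom p2 = \sum_{t \in \alpha Q + \gamma}\nu(t) - \sum_{t \in \alpha N + \gamma}\nu(t) = \chi(\alpha)\sum_{u \in \FF_p}\chi(u)\,\nu(u+\gamma),
\]
where the last step writes the signed sum as $\sum_t \chi\big(\alpha^{-1}(t-\gamma)\big)\nu(t)$, uses $\chi(\alpha^{-1}) = \chi(\alpha)$ and multiplicativity, and substitutes $u = t - \gamma$.

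Next I would isolate the wrap-around in $\nu(u+\gamma)$. Writing $g = \nu(\gamma)$ and using $\nu(u+\gamma) = \nu(u) + g - p\,[\,\nu(u) \geq p-g\,]$ together with $\sum_u \chi(u) = 0$, the constant $g$ drops out and
\[
 |\ms| - \binom p2 = \chi(\alpha)\left(\sum_{u\in\FF_p}\chi(u)\,\nu(u) - p\sum_{\substack{u\in\FF_p\\ \nu(u)\geq p-g}}\chi(u)\right).
\]
The first sum equals $S_Q - S_N$ with $S_Q = \sum_{x\in Q}\nu(x)$ and $S_N = \sum_{x\in N}\nu(x)$; since $S_Q + S_N = \binom p2$ it equals $2S_Q - \tfrac{p(p-1)}2$, so its modulus is $2\big|S_Q - \tfrac{p(p-1)}4\big| \le 2B_p$ by \Cref{Crl:Lebesgue}, where $B_p$ is $0$, $\tfrac12 p\sqrt p\ln p$, $\tfrac16 p\sqrt p\ln p$ in the three congruence classes. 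The second sum runs over the integers in the interval $[p-g,\,p-1]$, so by \Cref{Res:PolyaVinogradov} its modulus is at most $\sqrt p\ln p$, contributing $p\sqrt p\ln p$. Adding the two bounds yields exactly $2B_p + p\sqrt p\ln p = c_p\,p\sqrt p\ln p$ in each case, which is the claimed estimate.

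For the divisibility I would argue modulo $p$: since $\nu(f(x))$ reduces to $f(x)$ in $\ZZ/p\ZZ$,
\[
 |\ms| = \sum_{x\in\FF_p}\nu(f(x)) \equiv \alpha\sum_{x}x^2 + \beta\sum_x x + \gamma\sum_x 1 \pmod p,
\]
and the standard power-sum identities give $\sum_x x = \sum_x 1 = 0$ in $\FF_p$, while $\sum_x x^2 = 0$ for $p>3$, so $p \mid |\ms|$. I expect the main technical point to be the clean passage from $|\ms| - \binom p2$ to the character sum and the bookkeeping that makes the three constants $c_p$ emerge exactly, since the two error terms are of the same order $p\sqrt p\ln p$ and must be added rather than one absorbing the other. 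The divisibility is then a one-line congruence, the only degenerate case being $p=3$, where $\sum_x x^2 \neq 0$.
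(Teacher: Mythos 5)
Your proof is correct and follows essentially the same route as the paper: count $\ms$ column by column with $\beta=0$, extract the main term $2\sum_{y\in Q}\nu(y)$ via \Cref{Crl:Lebesgue} and the $\gamma$-shift term via the P\'olya-Vinogradov inequality, and get divisibility from the congruence $|\ms|\equiv\sum_x f(x)\pmod p$; your character-sum formulation merely unifies the paper's two cases $\alpha\in Q$ and $\alpha\in N$. Your caveat about $p=3$ is well taken: there $\sum_x x^2=-1\neq 0$, and indeed for $f(X)=X^2$ one gets $|\ms|=2$, so the divisibility claim (and the paper's own justification of it) genuinely requires $p>3$.
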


\begin{proof}
 Without loss of generality, we may suppose that $\beta = 0$.
 For every $x \in \FF_p$, the points of $\ms$ with first coordinate equal to $x$ are 
 \[
  (x,0),\, (x,1),\, \dots,\, (x,f(x)-1).
 \]
 Hence, there are $\nu(f(x))$ points of $\ms$ with first coordinate equal to $x$.
 This yields that
 \[
  |\ms| = \sum_{x \in \FF_p} \nu(\alpha x^2 + \gamma).
 \]
 Note that over $\FF_p$,
 \[
  \sum_{x \in \FF_p} \alpha x^2 + \gamma = 0,
 \]
 hence $|\ms|$ is divisible by $p$.
 
 First consider the case where $\alpha \in Q$.
 Then we find that
 \[
  |\ms| = \sum_{x \in \FF_p} \nu(x^2 + \gamma) = 2 \sum_{y\in Q} \nu(y) + p \Big( \nu(\gamma) - 2 |\sett{y \in Q}{y \geq p-\gamma}| \Big). 
 \]
 Note that
 \[
  \nu(\gamma) - 2 |\sett{y \in Q}{y \geq p-\gamma}| = - \sum_{y=\nu(-\gamma)}^{p-1} \chi(y),
 \]
 whose absolute value is bounded by $\sqrt p \ln(p)$ by the P\'olya-Vinogradov inequality.
 Moreover,
 \[
  \left| 2 \sum_{y \in Q} \nu(y) - \binom p 2 \right| \leq c_p - 1,
 \]
 by \Cref{Crl:Lebesgue}, which completes the proof for $\alpha \in Q$.

 The case where $\alpha \in N$ is completely analogous, using that
 \[
  \sum_{y \in N} \nu(y) = \binom p 2 - \sum_{y \in Q} \nu(y). \qedhere
 \]
\end{proof}

\begin{rmk}
 Note that in \Cref{Thm:SizeS}, if $\gamma = 0$, then we can get
 \[
  \left||\ms| - \binom p 2\right| \leq (c_p-1) p \sqrt p \ln(p).
 \]
 In particular, if $p \equiv 1 \pmod 4$, then $|\ms| = \binom p 2$.
\end{rmk}

Next, we examine the symmetries of the set $\ms$ of points below the parabola $Y = f(X) = \alpha X^2 + \gamma$.
We prove that it has a unique non-trivial symmetry, namely a reflection around the vertical axis, provided that $p$ is large enough.

\begin{thm}
\label{thm:sym}
 Suppose that $p \geq 97$.
 Embed $\ag(2,p)$ into $\pg(2,p)$ and consider $\ms$ as a point set in $\pg(2,p)$.
 The only non-trivial element of $\PGL(3,p)$ that stabilises $\ms$ setwise is $(X,Y,Z) \mapsto (-X,Y,Z)$.
\end{thm}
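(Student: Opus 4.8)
The plan is to first show that any $\phi \in \PGL(3,p)$ stabilising $\ms$ must fix the line at infinity $\ell_\infty$ together with the pair of directions $\{(0),(\infty)\}$, and only then to pin down $\phi$ using the column structure of $\ms$. Since $\phi$ sends lines to lines and $\ms$ to $\ms$, the weight $w(\ell):=|\ell\cap\ms|$ is constant on $\phi$-orbits of lines, so the number $\delta(P)$ of distinct weights occurring among the $p+1$ lines through a point $P$ is a $\phi$-invariant. By \Cref{Thm:Parabola}, all $\pr_{\ms,d}$ with $d\in\FF_p^*$ are cyclic shifts of $\pr_{\ms,1}$ and hence share a common image, an interval of length at most $\sqrt p\ln(p)$; thus every one of the $p(p-1)$ oblique lines has weight in this single short interval. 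Consequently every affine point, and every direction $(d)$ with $d\neq 0,\infty$, sees at most $\sqrt p\ln(p)+3$ distinct weights. By contrast, $\pr_{\ms,\infty}(c)=\nu(f(c))$ attains the $\frac{p+1}2$ distinct values $\nu(\alpha c^2+\gamma)$, and $\pr_{\ms,0}$ attains at least $\frac{p+1}2$ distinct values (its decrements are the fibre sizes of $f$, supported on the $\frac{p+1}2$-element image of $f$). So $\delta((0)),\delta((\infty))\geq\frac{p+1}2$. The hypothesis $p\geq 97$ is exactly what makes $\frac{p+1}2>\sqrt p\ln(p)+3$, so $(0)$ and $(\infty)$ are the unique two points with $\delta\geq\frac{p+1}2$. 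Hence $\phi$ stabilises $\{(0),(\infty)\}$ and therefore fixes their join $\ell_\infty$.

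Since $\phi$ fixes $\ell_\infty$ and $p$ is prime (so there is no field automorphism to worry about), $\phi$ acts on the affine plane as a genuine affine map $(x,y)\mapsto(ax+by+e,\,cx+dy+g)$, and fixing the two directions $(0)$ and $(\infty)$ forces its linear part to be either diagonal or anti-diagonal. I would rule out the anti-diagonal case $\phi(x,y)=(by+e,cx+g)$, $bc\neq 0$, as follows. Writing $A_\tau=\sett{w\in\FF_p}{\nu(f(w))>\tau}$ for the super-level sets of the height function, the condition $\phi(\ms)=\ms$ read one column at a time forces, for every $x$,
\[
 A_{\nu(cx+g)} \;=\; b\cdot\sett{z}{\nu(z)<\nu(f(x))}+e .
\]
As $x$ varies, $\nu(cx+g)$ runs through all of $\{0,\dots,p-1\}$, so \emph{every} $A_\tau$ is an arithmetic progression with first term $e$ and common difference $b$. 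But we may take $\beta=0$ by \Cref{Rmk:Parabola}, so $f$ is even and each $A_\tau$ is symmetric under $w\mapsto -w$; a symmetric progression with difference $b$ and length $|A_\tau|$ must satisfy $2e+(|A_\tau|-1)b=0$. Since $|A_\tau|$ is non-constant (it decreases from $p-|\set{w:f(w)=0}|$ to $0$), this cannot hold for all $\tau$ when $b\neq0$. This contradiction eliminates the swap.

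In the surviving diagonal case $\phi(x,y)=(ax+e,\,dy+g)$, the map sends the column $X=x$ to the column $X=ax+e$, so equality of weights gives $\nu(f(ax+e))=\nu(f(x))$ for all $x$, whence $f(aX+e)=f(X)$ as polynomials; comparing coefficients yields $a=\pm1$ and $e=0$. Using $f(\pm x)=f(x)$, the vertical map $\psi(y)=dy+g$ must then fix each bottom-anchored interval $\sett{y}{\nu(y)<\nu(f(x))}$ setwise. Here \Cref{Lm:Consecutive} enters: the image of $f$ contains at least $\frac{p-5}4$ consecutive pairs $\{k,k+1\}$, and for each such pair $\psi$ must fix the singleton $\sett{y}{\nu(y)<k+1}\setminus\sett{y}{\nu(y)<k}=\{k\}$, i.e.\ $\psi(k)=k$. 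An affine map of $\FF_p$ with at least two fixed points is the identity, so $d=1$ and $g=0$. Therefore $\phi(x,y)=(\pm x,y)$: either the identity, or the claimed reflection.

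The conceptual heart, and the step I expect to be the main obstacle, is the very first one: isolating the two exceptional directions from every other point of $\PG(2,p)$ by a single projective invariant. The count of distinct line-weights works precisely because \Cref{Thm:Parabola} confines all oblique lines to one short interval of weights, and the quantitative input (the P\'olya–Vinogradov bound behind that theorem) is what the threshold $p\geq 97$ calibrates, via $\frac{p+1}2>\sqrt p\ln(p)+3$. Once $\ell_\infty$ and $\{(0),(\infty)\}$ are fixed, the remaining arguments — the arithmetic-progression obstruction to the swap and the consecutive-values argument for the reflection — are comparatively soft algebra.
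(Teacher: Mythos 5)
Your proof is correct, and its decisive first step coincides with the paper's: isolate $(0)$ and $(\infty)$ among all points of $\pg(2,p)$ by counting distinct line weights, using \Cref{Thm:Parabola} to confine every oblique line to a single interval of at most $\sqrt p\ln(p)+1$ weights, with $p\geq 97$ calibrated so that $\tfrac{p+1}2>\sqrt p\ln(p)+3$. Where you differ is in everything after that. The paper separates $(0)$ from $(\infty)$ \emph{individually} by a second invariant — the number of consecutive pairs occurring among the intersection numbers of the pencil (at least $\tfrac{p-5}4$ for the vertical pencil by \Cref{Lm:Consecutive}, at most one for the horizontal pencil since $f$ takes the value $\gamma$ only once) — so the swap never arises; it then shows $\varphi$ fixes every horizontal line (enough horizontal lines are the unique line of their weight), forcing $\varphi(X,Y,Z)=(aX+bZ,Y,Z)$, and extracts $b=0$, $a=\pm1$ from the vertical weights $\nu(f(x))$. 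You instead pin down only the pair $\{(0),(\infty)\}$ and eliminate the swap by a self-contained structural argument: since each column of $\ms$ is a bottom-anchored interval, a swap would force every super-level set $A_\tau$ to be an arithmetic progression with first term $e$ and difference $b$, and the symmetry $A_\tau=-A_\tau$ (available because $\beta=0$) forces $2e+(|A_\tau|-1)b=0$, which cannot hold for two distinct lengths when $b\neq0$. Your endgame (the polynomial identity $f(aX+e)=f(X)$ for the horizontal part, and two fixed points of the vertical affine map supplied by the consecutive values of \Cref{Lm:Consecutive}) is an equally valid substitute for the paper's line-fixing argument. Two small points to tidy: in the progression argument the lengths $0$ and $p$ impose no constraint, so you should note that $|A_\tau|$ takes at least two distinct values strictly between $0$ and $p$ (the $\tfrac{p+1}2$-element image of $f$ gives this at once); and the theorem also asserts that $(X,Y,Z)\mapsto(-X,Y,Z)$ genuinely stabilises $\ms$, which your observation $f(-x)=f(x)$ establishes but which deserves an explicit sentence.
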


\begin{proof}
 First we prove that we can distinguish the points $(0)$ and $(\infty)$ from all other points of $\pg(2,q)$.
 Take any other point $P$.
 If $P$ lies on the line at infinity $\ell_\infty$, then $P = (d)$ for some $d \in \FF_p^*$.
 \Cref{Thm:Parabola} implies that the lines through $P$ different from $\ell_\infty$ can intersect $\ms$ in at most $\sqrt p \ln p + 1$ different numbers of points.
 If $P$ does not lie on $\ell_\infty$, then any line $\ell$ through $P$ but not through $(0)$ or $(\infty)$ can intersect $\ms$ in at most $\sqrt p \ln p + 1$ different numbers of points.
 We conclude that the lines through $P$ intersect $\ms$ in at most $\sqrt p \ln p + 3$ different numbers of points.

 As noted in the beginning of the proof of \Cref{Thm:SizeS}, the line with equation $X=x$ intersects $\ms$ in $\nu(f(x))$ points.
 Since the image of $f$ has size $\frac{p+1}2$, the lines through $(\infty)$ intersect $\ms$ in at least $\frac{p+1}2$ different numbers of points.
 Moreover, by \Cref{Lm:Consecutive}, the image of $f$ takes at least $\frac{p-5}4$ consecutive values of $\FF_p$.

 Now consider the lines through $(0)$.
 If a point $(x,y)$ belongs to $\ms$, then $(x,y+1)$ also belongs to $\ms$, except if $y+1 = f(x)$.
 Since $f$ takes $\frac{p+1}{2}$ different values, the lines through $(0)$ intersect $\ms$ in at least $\frac{p+1}2$ different numbers of points.
 Note also that since $f$ only takes the value $\gamma$ exactly once, these numbers contain at most one pair of consecutive integers.

 Since $p \geq 97$, $\sqrt p \ln (p) + 3 < \frac{p+1}2$, hence the points $(0)$ and $(\infty)$ are distinguished from all other points.
 In addition, we can distinguish $(0)$ and $(\infty)$ from each other by looking at how many pairs of consecutive integers occur as intersection numbers of the lines through $(0)$ and $(\infty)$.
 In particular, this means that any projectivity $\varphi$ stabilising $\ms$ must fix $(0)$ and $(\infty)$, and hence stabilise $\ell_\infty$.
 
 Next we prove that such a projectivity $\varphi$ must fix all lines through $(0)$.
 It follows from our previous arguments that if $y+1$ and $y+2$ both belong to the image of $f$, with $y \neq -1, -2$, then $Y=y$, $Y=y+1$, and $Y=y+2$ all intersect $\ms$ in a different number of points.
 In particular, no other line than $Y=y+1$ intersect $\ms$ in the same number of points as $Y=y+1$.
 Since the image of $f$ contains many consecutive elements by \Cref{Lm:Consecutive}, there are many integers $k$ such that there exists a unique line through $(0)$ intersecting $\ms$ in exactly $k$ points.
 It follows that $\varphi$ fixes at least 3 lines through $(0)$, hence it must fix all lines through $(0)$.

 We have established that a projectivity $\varphi$ stabilising $\ms$ must fix $(0)$, $(\infty)$, and all lines through $(0)$.
 It follows that $\varphi(X,Y,Z) = (aX+bZ,Y,Z)$ for some $a,b \in \FF_p$.
 The line $X=0$ is the only line through $(\infty)$, distinct from $\ell_\infty$, containing $\nu(\gamma)$ points.
 Therefore, $\varphi$ should fix this line, which implies that $b=0$.
 Furthermore, for any $x\in \FF_p^*$, the only lines through $(\infty)$, distinct from $\ell_\infty$, containing $\nu(f(x))$ points are $X=x$ and $X=-x$.
 It follows that $a$ equals $1$ or $-1$.

 Lastly, it is easy to check that $\varphi(X,Y,Z) = (-X,Y,Z)$ actually stabilises $\ms$.
\end{proof}

Now that we determined the symmetries of the sets $\ms$ under consideration, we might also wonder how many sets $\ms$ we found up to isomorphism.
To address this, we use the following lemma.

\begin{lm}
 \label{Lm:Equiv Ineq}
 Let $a, b \in \FF_p$.
 Then $a < b$ if and only if $-a-1 > -b-1$.
\end{lm}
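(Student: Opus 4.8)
The plan is to transport the order relation on $\FF_p$ to the usual order on the integers $\{0,\dots,p-1\}$ via the bijection $\nu$, and then to recognise the map $x \mapsto -x-1$ as an order-reversing reflection of that interval. By definition, for $a,b \in \FF_p$ the statement $a < b$ means $\nu(a) < \nu(b)$ as integers, and likewise $-a-1 > -b-1$ means $\nu(-a-1) > \nu(-b-1)$. So it suffices to control $\nu(-a-1)$ in terms of $\nu(a)$.

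The single key computation is the identity $\nu(-a-1) = p-1-\nu(a)$, valid for every $a \in \FF_p$. Indeed, since $0 \le \nu(a) \le p-1$, the integer $p-1-\nu(a)$ again lies in $\{0,\dots,p-1\}$, and it is congruent to $-\nu(a)-1 \equiv -a-1 \pmod p$; by uniqueness of the representative in that interval it must equal $\nu(-a-1)$.

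With this identity in hand the lemma is immediate: the integer map $t \mapsto p-1-t$ is strictly decreasing, so $\nu(a) < \nu(b)$ holds if and only if $p-1-\nu(a) > p-1-\nu(b)$, that is, $\nu(-a-1) > \nu(-b-1)$. Unwinding the definitions, this says precisely that $a<b$ if and only if $-a-1 > -b-1$.

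I do not expect any genuine obstacle here; the only point requiring a moment's care is checking that $p-1-\nu(a)$ really falls in the range $\{0,\dots,p-1\}$, so that it is the canonical representative, but this follows at once from $0 \le \nu(a) \le p-1$.
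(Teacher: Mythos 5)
Your proof is correct and follows essentially the same route as the paper's: transporting the order to the integers via $\nu$ and recognising $x \mapsto -x-1$ as the reflection $t \mapsto p-1-t$ of $\{0,\dots,p-1\}$. In fact your uniform identity $\nu(-a-1) = p-1-\nu(a)$ is cleaner than the paper's argument, which derives the same fact through a case distinction on whether $a=0$ via the intermediate quantity $\nu(-a)$.
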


\begin{proof}
 First suppose that $a < b$, which we formulate denote as $\nu(a) < \nu(b)$.
 Since
 \[
  \nu(-a) = \begin{cases}
   \nu(a) = 0 & \text{if } a = 0, \\
   p-\nu(a) & \text{if } a \neq 0,
  \end{cases}
 \]
 We see that $\nu(a) < \nu(b)$ is equivalent to $\nu(-a) > \nu(-b)$ if $a$ and $b$ are both non-zero, and equivalent to $a = 0 \neq b$ otherwise.

 Now consider the inequality $\nu(-a-1) > \nu(-b-1)$.
 Since
 \[
  \nu(-a-1) = \begin{cases}
   p - 1 - \nu(a) = p-1 & \text{if } a = 0, \\
   \nu(-a) - 1 & \text{if } a \neq 0, 
  \end{cases}
 \]
 We see that $\nu(-a-1) > \nu(-b-1)$ if $a$ and $b$ are both non-zero, and equivalent to $a=0 \neq b$ otherwise.
\end{proof}

\begin{rmk}
 We saw that any quadratic polynomial $f(X)$ gives rise to a set $\ms = \sett{(x,y) \in \FF_p^2}{y < f(x)}$ where the projection functions $\pr_{\ms,d}$ with $d \in \FF_p^*$ are cyclic shifts of each other.
 We might wonder how many such sets we find up to isomorphism, where isomorphism is understood to be an affine transformation.
 In the condition $y < f(x)$, we might replace $<$ by $\leq$, $>$, or $\geq$.
 
 There are two things to note.
 First, by \Cref{Lm:Equiv Ineq}, the affine transformation $(X,Y) \mapsto (X,-Y-1)$ maps $\sett{(x,y) \in \FF_p^2}{y < f(x)}$ to $\sett{(x,y) \in \FF_p^2}{y > -f(x)-1}$ and maps the set $\sett{(x,y) \in \FF_p^2}{y \leq f(x)}$ to $\sett{(x,y) \in \FF_p^2}{y \geq -f(x)-1}$.
 Secondly, moving from $<$ to $\geq$ or from $\leq$ to $>$ is equivalent to taking the complement.
 Thus, it suffices to only focus on the sets defined using either $<$ or $\leq$.
 
 As observed in \Cref{Rmk:Parabola}, it is enough to fix a non-square $\alpha \in \FF_p$, and only consider the quadratic polynomials of the family
 \[
  \mf = \sett{X^2 + \gamma}{ \gamma \in \FF_p } \cup \sett{\alpha X^2 + \gamma}{ \gamma \in \FF_p }.
 \]
 
 Suppose that $p \geq 97$.
 Then we know from \Cref{thm:sym} that directions $(0)$ and $(\infty)$ are uniquely distinguished.
 Given the set $\ms = \sett{(x,y) \in \FF_p^2}{y < f(x)}$, with $f \in \mf$, we can reconstruct $f$.
 We can determine $\gamma$ using that $\nu(\gamma)$ is the only value which $\pr_{\ms,\infty}$ takes only once.
 Then the image of $\pr_{\ms,\infty}$ is either $(Q \cup \{0\}) + \gamma$ if $f(X) = X^2 + \gamma$ or $(N \cup \{0\}) + \gamma$ if $f(X) = \alpha X^2 + \gamma$.
 Hence, we find $2p$ pairwise non-isomorphic sets.
 Likewise, the $2p$ sets $\sett{(x,y) \in \FF_p^2}{y \leq f(x)}$, with $f \in \mf$ are pairwise non-isomorphic.
 The only remaining question is when the sets
 $\ms = \sett{(x,y) \in \FF_p^2}{y < f(x)}$ and $T = \sett{(x,y) \in \FF_p^2}{y \leq g(x)}$ with $f, g \in \mf$ can be isomorphic.
 We can again look at $\pr_{\ms,\infty}$ and $\pr_{T,\infty}$.
 It readily follows that $\ms$ and $T$ are isomorphic if and only if $f(X) = g(X)+1$ and $f(X)$ never takes the value $-1$.
 Since each quadratic polynomial $f$ takes $\frac{p+1}2$ different values, $\mf$ contains $2 \frac{p+1}2 = p+1$ polynomials $f$ that do take the value $-1$.
 We conclude that we find exactly $3p+1$ pairwise non-isomorphic sets.
\end{rmk}

\paragraph{Acknowledgements.} The first author is grateful to the second author for her hospitality during his visit to Budapest.
The first author was partially supported by Fonds Wetenschappelijk Onderzoek project 12A3Y25N and by a Fellowship of the Belgian American Educational Foundation.

\newcommand{\etalchar}[1]{$^{#1}$}

\end{document}